\theoremstyle{plain}   %% This is the default, anyway
\newtheorem*{th_PZ}{Theorem PZ}
\newtheorem*{th_I}{Theorem I}
\newtheorem*{cor_PZ}{Corollary PZ}
\newtheorem{theorem}{Theorem}[section]   % Numbered within each section
\newtheorem{corollary}[theorem]{Corollary}     % Numbered along with thm
\newtheorem{lemma}[theorem]{Lemma}         % Numbered along with thm
\newtheorem{proposition}[theorem]{Proposition}  % Numbered along with thm
\newtheorem{fact}[theorem]{Fact}
\theoremstyle{definition}
\newtheorem{definition}[theorem]{Definition}   % Numbered along with thm
\theoremstyle{remark}
\newtheorem{remark}[theorem]{Remark}        % Numbered along with thm
\numberwithin{equation}{section}
\newcommand{\ep}{\varepsilon}
\newcommand{\R}{{\mathbb R}}
\newcommand{\N}{{\mathbb N}}
\newcommand{\dom}{\operatorname{dom}}
\newcommand{\vf}{\varphi}
\newcommand{\interior}{\operatorname{int}}
\begin{document}

\title[G\^ ateaux and Hadamard differentiability]{G\^ ateaux and Hadamard differentiability via directional differentiability}

\thanks{The research was supported by the grant GA\v CR P201/12/0436.}

\author{Lud\v ek Zaj\'\i\v{c}ek}

\subjclass[2000]{Primary: 46G05; Secondary: 26B05, 49J50.}

\keywords{ G\^ ateaux differentiability, Hadamard differentiability, directional derivatives, Hadamard directional derivatives,  $\sigma$-directionally porous set, pointwise Lipschitz mapping}

\email{zajicek@karlin.mff.cuni.cz}

\address{Charles University,
Faculty of Mathematics and Physics,
Sokolovsk\'a 83,
186 75 Praha 8-Karl\'\i n,
Czech Republic}

%\date{\today}
% The correct dates will be entered by the editor

\begin{abstract} 
Let $X$ be a separable Banach space, $Y$ a Banach space and $f: X \to Y$ an arbitrary mapping. 
Then the following implication holds at each point $x \in X$ except a $\sigma$-directionally porous set:\  
 If the one-sided Hadamard directional derivative $f'_{H+}(x,u)$ exists in all directions $u$ from a set $S_x \subset X$
  whose linear span is dense in $X$, then $f$ is Hadamard differentiable at $x$.
    This theorem improves and generalizes a recent result of A.D. Ioffe, in which the linear span of $S_x$ equals $X$ and $Y = \R$.  
    An analogous theorem, in which $f$ is pointwise Lipschitz, and which deals with the usual one-sided derivatives and G\^ ateaux differentiability is also proved.  It generalizes a result of D. Preiss and the author, in which $f$ is supposed to be Lipschitz.
    \end{abstract}
   
%\markboth{L.~Zaj\'{\i}\v{c}ek}{G\^ ateaux and Hadamard differentiability}

\maketitle

\section{Introduction}

The following result (\cite[Theorem 5]{PZ}) was used in the proof of the till now strongest  version of Rademacher's theorem on G\^ ateaux differentiability of Lipschitz mappings on a separable Banach space.

\begin{th_PZ}\label{th_PZ}
Let $X$ be a separable Banach space, $Y$ a Banach space, $G \subset X$ an open set, and $f: G \to Y$ a Lipschitz mapping. Then there exists a $\sigma$-directionally porous set $A \subset G$ such that for every $x \in G \setminus A$
 the set $U_x$ of those directions $u \in X$ in which the one-sided derivative $f'_+(x,u)$ exists is a closed linear subspace of $X$. Moreover, the mapping $u \mapsto f'_+(x,u)$ is linear on $U_x$. 
\end{th_PZ}

An immediate consequence of this result is the following.

\begin{cor_PZ}\label{cor_PZ}
Let $X$ be a separable Banach space, $Y$ a Banach space, $G \subset X$ an open set, and $f: G \to Y$ a Lipschitz mapping. Then the following implication holds at each point $x \in G$ except a $\sigma$-directionally porous set:
  
 If the one-sided directional derivative $f'_{+}(x,u)$ exists in all directions $u$ from a set $S_x \subset X$
  whose linear span is dense in $X$, then $f$ is G\^ ateaux differentiable at $x$.
\end{cor_PZ}
 
   Note that  each  $\sigma$-directionally porous subset of a
 separable Banach spaces $X$ is   not only a first category set, but it is also ``measure null'': it is Aronszajn (=Gauss) null, and so also Haar null, (see \cite[p. 164 and  Chap. 6]{BL}) and
  also $\Gamma$-null (see \cite[Remark 5.2.4] {LPT}). 
  
  It is an easy  well-known fact that if $f:X \to Y$ is a Lipschitz mapping between Banach spaces, then the Hadamard (one-sided) directional derivatives coincide with the usual (one-sided) directional derivatives and the Hadamard derivative coincides with the G\^ ateaux derivative.
  
  A.D. Ioffe  in \cite{Io}  recently observed that some known results dealing with the usual directional derivatives
   (and G\^ ateaux differentiability) of {\it Lipschitz} functions can be generalized to results dealing with the Hadamard directional derivatives (and Hadamard differentiability) of {\it arbitrary} functions. (Note that this Ioffe's idea
    was followed in \cite{Za1}.)

  Ioffe's \cite[Theorem 3.7(b)]{Io} can be reformulated (see Remark \ref{dom}  below) in the following way.
  
  \begin{th_I}\label{th_I}
  Let $X$ be a separable Banach space,  $G \subset X$ an open set, and $f: G \to \R$ an arbitrary function. Then the following implication holds at each point $x \in G$ except a $\sigma$-directionally porous set:
   
  If the one-sided Hadamard directional derivative $f'_{H+}(x,u)$ exists in all directions $u$ from a set $S_x \subset X$
  whose linear span equals to $X$, then $f$ is Hadamard differentiable at $x$.
   \end{th_I}
   
   So, Theorem I is a partial generalization of Corollary PZ (since in Theorem I
    is a stronger assumption on $S_x$ and $Y= \R$). We will prove (see Corollary \ref{hadcom} below) that the corresponding full generalization of Corollary PZ holds. 
    
    Moreover, Theorem \ref{had} below  on Hadamard derivatives generalizes Theorem PZ.

      Further,  we generalize Theorem PZ in another direction showing that, in this theorem,
      it is sufficient to suppose that $f$ is pointwise Lipschitz (see Corollary \ref{plux} below).

      Note  that the methods of proofs from \cite{PZ} and \cite{Io} cannot be easily used in the case when $f: X \to Y$ is not continuous and $Y$ is nonseparable, since then $f(X)$ need not  be separable. From this reason we use an alternative method based on a small trick in the proof of Lemma \ref{rozd} (which shows that $f'_+(x,w_x)$ and $f'(y_k, w_{y_k})$ are ``automatically'' close to one another). 
      
    The main results are proved in Section 3. 
    In Section 2 we recall basic definitions and  known results we need.

 \section{Preliminaries} 
 
 In the following, by a Banach space we mean a real Banach space. 
  The symbol $B(x,r)$ will denote (in a metric space) the open ball with center $x$ and radius $r$.
  
 If $X$ is a Banach space, we set 
    $S_X:= \{x \in X: \|x\|=1\}$. 
    Further, if $x \in X$, $v \in S_X$ and $\delta >0$, then we define the open {\it cone
     $C(x,v,\delta)$} as the set of all $y \neq x$ for which  $\|v - \frac{y-x}{\|y-x\|}\| < \delta$.

    Let $X$, $Y$ be Banach spaces, $G \subset X$ an open set, and $f:G \to Y$ a mapping. 
    
   We say  that {\it $f$ is Lipschitz at $x \in G$} if  $\limsup_{y \to x} \frac{\|f(y)-f(x)\|}{\|y-x\|} < \infty$.
   We say  that $f$ is {\it pointwise Lipschitz} (on $G$) if $f$ is Lipschitz at all points of $G$.

   The directional and one-sided  directional derivatives 
    of $f$ at $x\in G$ in the direction $v\in X$ are defined respectively by
   $$f'(x,v) := \lim_{t \to 0} \frac{f(x+tv)-f(x)}{t}\ \ \text{and}\ \  f'_+(x,v) := \lim_{t \to 0+} \frac{f(x+tv)-f(x)}{t}.$$
    The {\it Hadamard directional and one-sided  directional derivatives} 
    of $f$ at $x\in G$ in the direction $v\in X$ are defined respectively by
   $$f'_H(x,v) := \lim_{z \to v, t \to 0} \frac{f(x+tz)-f(x)}{t}\ \ \text{and}\ \  f'_{H+}(x,v) := \lim_{z \to v, t \to 0+} \frac{f(x+tz)-f(x)}{t}.$$
   
   The following facts are well-known and easy to prove.
   
   \begin{fact}\label{f}
    Let $X$, $Y$ be Banach spaces, $G \subset X$ an open set, $x \in G$, $v \in X$,  and $f:G \to Y$ a mapping. 
    Then the following assertions hold.
    \begin{enumerate}
    \item
    The derivative $f'(x,v)$ (resp. $f'_H(x,v)$) exists if and only if  $f'_+(x,-v) = -f'_+(x,v)$
    (resp. $f'_{H+}(x,-v) = - f'_{H+}(x,v)$). 
    \item
    If  $f'_H(x,v)$ (resp. $f'_{H+}(x,v)$) exists, then $f'(x,v)=f'_H(x,v)$ (resp. $f'_+(x,v)=f'_{H+}(x,v)$).
    \item
     If $f$ is locally Lipschitz on $G$, then  $f'(x,v)=f'_H(x,v)$ (resp. $f'_+(x,v)=f'_{H+}(x,v)$) whenever one of these two derivatives exists.
     \item
     If $f'_+(x,v)$ (resp. $f'_{H+}(x,v)$) exists and $t>0$, then $f'_+(x,tv)= t f'_+(x,v)$ (resp. $f'_{H+}(x,tv)= t f'_{H+}(x,v)$).
     \item
     If $f$ is Lipschitz at $x$ with $\limsup_{y \to x} \frac{\|f(y)-f(x)\|}{\|y-x\|}\leq K < \infty$ and 
      $f'_+(x,v)$ exists, then  $\|f'_+(x,v)\| \leq K \|v\|$.
      \item
      If $V_x$ is the set of all $u \in X$, for which $f'_{H+}(x,u)$ exists, then the mapping $u \mapsto f'_{H+}(x,u)$
       is continuous on $V_x$.
       \item
       If $v \in S_X$ and $f'_{H+}(x,v)$ exists, then there exists a cone  $C=C(x,v,\delta)$ such that  $\limsup_{y \to x, y \in C} \frac{\|f(y)-f(x)\|}{\|y-x\|} < \infty$. 
        \end{enumerate}
      \end{fact}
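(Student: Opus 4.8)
The plan is to verify the seven assertions one by one, each directly from the definitions; the Hadamard statements run in complete parallel to the one-sided ones, the only difference being that the auxiliary variable $z \to v$ is carried along through every manipulation. For (i) I would split the two-sided limit defining $f'(x,v)$ into the parts $t \to 0+$ and $t \to 0-$, which together cover a punctured neighbourhood of $0$ and each of which accumulates there, so the two-sided limit exists iff both parts exist and agree; substituting $t = -s$ in the $t \to 0-$ part turns $\frac{f(x+tv)-f(x)}{t}$ into $-\frac{f(x+s(-v))-f(x)}{s}$, whose limit is $-f'_+(x,-v)$, whence $f'(x,v)$ exists iff $f'_+(x,-v) = -f'_+(x,v)$. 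Performing the same substitution while replacing the direction variable $z$ by $-z$ (so that $-z \to -v$) yields the Hadamard version. Assertion (ii) is immediate: restricting the joint limit $\lim_{z\to v, t\to 0}$ (resp. $t\to 0+$) to the constant value $z = v$ recovers $f'(x,v)$ (resp. $f'_+(x,v)$), which must therefore equal the full limit whenever the latter exists.

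For (iii), by (ii) it suffices to show that if $f$ is locally Lipschitz near $x$, say with constant $K$, and $f'(x,v)$, resp. $f'_+(x,v)$, exists, then the corresponding Hadamard derivative exists and agrees; the key estimate is $\|\frac{f(x+tz)-f(x)}{t} - \frac{f(x+tv)-f(x)}{t}\| \le K\|z-v\|$ for $t$ small and $z$ near $v$, which shows the difference quotient at $z$ stays within $K\|z-v\|$ of the one at $v$, so letting $z \to v$ and $t \to 0$ (resp. $t \to 0+$) and invoking the convergence of the latter gives the claim. For (iv) I would reparametrize: in the Hadamard case write the nearby direction as $z = tw$ with $w \to v$, and in both cases put $r = st$ where $s$ is the new small parameter, so that $\frac{f(x+s(tv))-f(x)}{s} = t\cdot\frac{f(x+rv)-f(x)}{r}$ (with $w$ in place of $v$ in the Hadamard case); since $t>0$ is fixed the limit filters match and the factor $t$ comes out. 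For (v) I would write $\frac{\|f(x+tv)-f(x)\|}{t} = \|v\|\cdot\frac{\|f(x+tv)-f(x)\|}{\|(x+tv)-x\|}$, take $\limsup$ as $t \to 0+$ using the hypothesis (giving $\limsup \le K\|v\|$ on the left), and note that existence of $f'_+(x,v)$ upgrades this $\limsup$ to the genuine limit $\|f'_+(x,v)\|$.

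For (vi), given $u_0 \in V_x$ and $\ep > 0$, the definition of $f'_{H+}(x,u_0)$ supplies $\delta, \eta > 0$ with $\|\frac{f(x+tz)-f(x)}{t} - f'_{H+}(x,u_0)\| < \ep$ whenever $\|z-u_0\| < \delta$ and $0 < t < \eta$; then for any $u \in V_x$ with $\|u - u_0\| < \delta/2$ this inequality applies to all $z$ with $\|z - u\| < \delta/2$, and letting $z \to u$, $t \to 0+$ gives $\|f'_{H+}(x,u) - f'_{H+}(x,u_0)\| \le \ep$, proving continuity at $u_0$. Finally, for (vii) the existence of $f'_{H+}(x,v)$ yields $\delta, \eta > 0$ with $\frac{\|f(x+tz)-f(x)\|}{t} < \|f'_{H+}(x,v)\| + 1$ whenever $\|z-v\| < \delta$ and $0 < t < \eta$; then for $y \in C(x,v,\delta)$ sufficiently close to $x$ I would set $t := \|y-x\|$ and $z := \frac{y-x}{\|y-x\|} \in S_X$, so that $\|z - v\| < \delta$ holds by the very definition of the cone and $t < \eta$ once $y$ is close enough, and the bound becomes $\frac{\|f(y)-f(x)\|}{\|y-x\|} < \|f'_{H+}(x,v)\| + 1$, giving the finite $\limsup$. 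None of these steps is a genuine obstacle; the only points deserving care are the matching of joint-limit filters in (iii) and (iv) and the reparametrization $y = x + tz$ in (vii), where one must check that $z$ indeed lies within $\delta$ of $v$.
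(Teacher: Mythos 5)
Your proposal is correct: each of the seven verifications is sound (including the two points you flag as needing care, the filter-matching in (iii)--(iv) and the substitution $t=\|y-x\|$, $z=(y-x)/\|y-x\|$ in (vii)). The paper states these facts as well known and gives no proof, and your elementary, definition-chasing argument is exactly the intended one, so there is nothing substantive to compare.
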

   
 We will need the following easy fact (see \cite[Lemma 2.3]{Za2}).
\begin{lemma}\label{hnli}
 Let $X$ be a Banach space, $Y$ a Banach space, $G \subset X$ an open set, $a \in G$, and $f:G \to Y$ a mapping.
  Then the following are equivalent.
  \begin{enumerate}
  \item
    $f'_{H+}(a,0)$ exists,
    \item
    $f'_{H}(a,0)$ exists,
    \item
    $f'_{H}(a,0)=0$,
    \item
    $f$ is Lipschitz at $a$.
    \end{enumerate}
\end{lemma}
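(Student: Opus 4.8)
The plan is to prove the cycle (i) $\Rightarrow$ (iv) $\Rightarrow$ (iii) $\Rightarrow$ (ii) $\Rightarrow$ (i); only the first implication goes beyond a mechanical unwinding of definitions. The implication (iii) $\Rightarrow$ (ii) is vacuous; (ii) $\Rightarrow$ (i) holds because $f'_{H+}(a,0)$ is, by definition, the limit of the same difference quotients that define $f'_H(a,0)$, merely taken over the subset $\{t>0\}$ of the index set, so existence of the two-sided limit forces existence of the one-sided one. For (iv) $\Rightarrow$ (iii): if $f$ is Lipschitz at $a$, fix $K<\infty$ and $\delta_0>0$ with $\|f(a+h)-f(a)\|\le K\|h\|$ whenever $\|h\|<\delta_0$; then for all $z\in X$ and all $t\neq 0$ with $\|tz\|<\delta_0$ we have $\bigl\|(f(a+tz)-f(a))/t\bigr\|\le K\|z\|$, which tends to $0$ as $z\to 0$, so $f'_H(a,0)$ exists and equals $0$.

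The substantive step is (i) $\Rightarrow$ (iv), and the key device is to rewrite the hypothesis $f'_{H+}(a,0)=L$ in terms of the increment $h:=tz$ in place of the pair $(z,t)$. Unwinding the definition, this hypothesis says precisely that for every $\varepsilon>0$ there is $\delta>0$ such that
$$\|f(a+h)-f(a)-tL\|<\varepsilon t\qquad\text{whenever}\quad 0<t<\delta\ \text{and}\ \|h\|<t\delta .$$
First I would deduce $L=0$: fixing any single $h$ with $0<\|h\|<\delta^2/4$ (the case $X=\{0\}$ being trivial) and feeding the displayed estimate first with $t=\delta/2$ and then with $t=\delta/4$ — both admissible, since then $0<t<\delta$ and $\|h\|<t\delta$ — the triangle inequality yields $\tfrac{\delta}{4}\|L\|<\tfrac{3}{4}\varepsilon\delta$, hence $\|L\|<3\varepsilon$; as $\varepsilon>0$ was arbitrary, $L=0$. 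With $L=0$ in hand, for every $h$ with $0<\|h\|<\delta^2/2$ I would substitute $t:=2\|h\|/\delta$, which lies in $(0,\delta)$ and satisfies $\|h\|<t\delta$; the estimate then gives $\|f(a+h)-f(a)\|<\varepsilon t=\tfrac{2\varepsilon}{\delta}\|h\|$. Hence $\limsup_{h\to 0}\|f(a+h)-f(a)\|/\|h\|\le 2\varepsilon/\delta<\infty$, i.e.\ $f$ is Lipschitz at $a$. (Note this also re-proves (iii).)

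The only point calling for genuine care — and the one I would write out in full — is the translation between the $(z,t)$-formulation and the $h$-formulation of $f'_{H+}(a,0)$, together with verifying that the particular values of $t$ used above meet \emph{both} constraints $0<t<\delta$ and $\|h\|<t\delta$ precisely on the claimed ranges of $\|h\|$ (which is what confines the computation to $\|h\|<\delta^2/4$, resp.\ $\|h\|<\delta^2/2$, and this is already enough to conclude). Everything beyond that bookkeeping is an elementary estimate.
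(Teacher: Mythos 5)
Your proof is correct. The paper does not actually prove Lemma \ref{hnli}; it only cites it as an easy known fact from \cite[Lemma 2.3]{Za2}, so there is no in-paper argument to compare against. Your cycle (i)$\Rightarrow$(iv)$\Rightarrow$(iii)$\Rightarrow$(ii)$\Rightarrow$(i) is complete, and the one substantive step --- translating the $(z,t)$-limit into the increment $h=tz$ and checking the admissibility constraints $0<t<\delta$, $\|h\|<t\delta$ for the chosen values of $t$ --- is carried out correctly; this is the standard route to the equivalence. (A minor simplification: once you have the $h$-formulation, taking $h=0$, or letting $h\to 0$ with $t$ fixed, gives $L=0$ immediately, though your two-value-of-$t$ argument has the advantage of not depending on whether the convention for $\lim_{z\to 0}$ admits $z=0$.)
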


 The usual modern definition of the Hadamard derivative is the following:
 
 A continuous linear operator $L: X \to Y$ is said to be the {\it Hadamard derivative} of $f$ at a point $x \in X$ if
  $$ \lim_{t \to 0} \frac{f(x+tv)-f(x)}{t} = L(v)\ \ \  \text{for each}\ \ \ v \in X$$
   and the limit is uniform with respect to $v \in C$, whenever $C\subset X$ is a compact set.
    In this case we set $f'_H(x) := L$. 
   
   The following fact is well-known (see \cite{Sha}):
   
   \begin{lemma}\label{eh}
   Let $X$, $Y$ be Banach spaces, $\emptyset \neq G \subset X$  an open set, $x \in G$, $f: G \to Y$ a mapping and
    $L: X \to Y$ a continuous linear operator. Then the following conditions are equivalent:
    \begin{enumerate}
    \item
    $f'_H(x) = L$,
    \item
    $f'_H(x,v)= L(v)$ for each $v \in X$,
    \item
    if $\vf: [0,1] \to X$ is such that $\vf(0)=x$ and $\vf_+'(0)$ exists, then $(f \circ \vf)_+'(0)= L(\vf_+'(0))$.
    \end{enumerate}
    \end{lemma}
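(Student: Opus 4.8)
The plan is to establish the equivalences through the cycle (i) $\Rightarrow$ (ii) $\Rightarrow$ (iii) $\Rightarrow$ (i). There is nothing deep here: all three conditions express that the difference quotient $q(z,t):=(f(x+tz)-f(x))/t$ tends to $L(v)$ as $(z,t)\to(v,0)$ (with (iii) recasting this via composition with curves through $x$), and the work is in translating between the ``uniform on compacta'' form, the ``joint (sequential) limit'' form, and the ``composition with curves'' form. Throughout I use freely that $L$ is continuous and linear, so $z\mapsto L(z)$ is continuous.

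The equivalence (i) $\Leftrightarrow$ (ii) is the standard interplay between uniform convergence on every compact set and joint sequential convergence, the bridge being that a convergent sequence together with its limit is compact. To get (ii) from (i): fix $v$ and take arbitrary $z_n\to v$ and $t_n\to 0$ with $t_n\ne 0$; the set $C:=\{v\}\cup\{z_n:n\in\N\}$ is compact, so $\sup_{z\in C}\|q(z,t_n)-L(z)\|\to 0$ by (i), and since $L(z_n)\to L(v)$ this forces $q(z_n,t_n)\to L(v)$; as $(z_n,t_n)$ was arbitrary, $f'_H(x,v)=L(v)$. Conversely, if (ii) holds but (i) fails on some compact $C$, choose $\varepsilon>0$, $t_n\to 0$ and $v_n\in C$ with $\|q(v_n,t_n)-L(v_n)\|\ge\varepsilon$; passing to a subsequence with $v_n\to v\in C$ and using $f'_H(x,v)=L(v)$ together with $L(v_n)\to L(v)$ gives a contradiction.

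For (ii) $\Rightarrow$ (iii): if $\varphi(0)=x$ and $w:=\varphi_+'(0)$ exists, then $z(t):=(\varphi(t)-x)/t\to w$ as $t\to 0+$ and $\varphi(t)=x+tz(t)$; since $\varphi$ is right-continuous at $0$, $f\circ\varphi$ is defined near $0$, and $(f(\varphi(t))-f(x))/t=q(z(t),t)\to L(w)$ straight from the joint limit in (ii). The implication (iii) $\Rightarrow$ (i) -- which I expect to be the only mildly delicate step -- goes by contradiction: if (i) fails, pick a compact $C$, $\varepsilon>0$, $t_n\to 0$ and $v_n\in C$ with $\|q(v_n,t_n)-L(v_n)\|\ge\varepsilon$, then pass to a subsequence along which $v_n\to v$, all $t_n$ have the same sign, and $|t_n|$ is strictly decreasing. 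In the case $t_n>0$, define $\varphi$ on $[0,1]$ by $\varphi(0)=x$, $\varphi(t_n)=x+t_nv_n$, affine interpolation between consecutive $t_n$'s, and extended arbitrarily to the rest of $[0,1]$; a one-line computation shows that on $[t_{n+1},t_n]$ the quotient $(\varphi(t)-x)/t$ is a convex combination of $v_n$ and $v_{n+1}$, hence $\to v$, so $\varphi_+'(0)=v$. Then (iii) gives $(f\circ\varphi)_+'(0)=L(v)$, and reading this along $t=t_n$, together with $L(v_n)\to L(v)$, contradicts the choice of the subsequence. The case $t_n<0$ reduces to the previous one by replacing $t_n,v_n$ with $-t_n,-v_n$ and using $L(-v)=-L(v)$. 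Thus the crux is this interpolation trick, which manufactures from a bad sequence a curve with a prescribed one-sided derivative at $0$; everything else is mechanical.
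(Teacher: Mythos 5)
Your proof is correct. Note that the paper itself offers no proof of this lemma: it records it as well known and cites Shapiro, so there is no argument to compare against line by line. Your cycle (i)$\Rightarrow$(ii)$\Rightarrow$(iii)$\Rightarrow$(i) is the standard way to establish the equivalence, and the one genuinely non-mechanical step is exactly where you locate it: the piecewise-affine interpolation in (iii)$\Rightarrow$(i), which converts a witness $(v_n,t_n)$ of non-uniformity on a compact set into a curve $\varphi$ with $\varphi'_+(0)=v$, using that $(\varphi(t)-x)/t$ is a convex combination of $v_n$ and $v_{n+1}$ on $[t_{n+1},t_n]$ and hence converges to $v$. The reduction of the case $t_n<0$ via $(t_n,v_n)\mapsto(-t_n,-v_n)$ correctly handles the two-sidedness of the limits in (i) and (ii) against the one-sidedness of (iii). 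Two small points worth making explicit if you write this up: after passing to a subsequence you should also arrange $t_1\le 1$ so that $\varphi$ is defined on $[0,1]$; and in both (ii)$\Rightarrow$(iii) and (iii)$\Rightarrow$(i) one should note that $\varphi(t)\to x$ (which follows from boundedness of $(\varphi(t)-x)/t$) together with openness of $G$ guarantees that $f\circ\varphi$ is defined on a right neighbourhood of $0$, so the one-sided derivative in (iii) makes sense. Neither affects the validity of the argument.
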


    \begin{definition}
    Let $X$ be a Banach space. We say that $A \subset X$ is {\it directionally porous at a point $x \in X$}, if there exist
     $0 \neq v \in X$, $p>0$  and a sequence  $t_n \to 0$ of positive real numbers such that $B(x+  t_n v, p t_n) \cap A = \emptyset$. (In this case we say that $A$ is  {\it porous at $x$ in  the direction $v$}.)
      
      We say that $A \subset X$ is {\it directionally porous} if $A$ is directionally porous at each point $x \in A$. 
      
      We say that $A \subset X$ is {\it $\sigma$-directionally porous} if it is a countable union of directionally porous
       sets.
       \end{definition}

       We will need the obvious fact that $A$ is not porous at $x$ in the direction $v\neq 0$ if and only if 
       \begin{equation}\label{nepo}
       \text{for each $\omega>0$ there exists $\delta>0$ such that  $B(x+tv, \omega t) \cap A \neq \emptyset$ for
        $0<t<\delta.$}
       \end{equation}
       
       It is easy to see that if, for some $v \in S_X$, $\delta>0$, $r>0$,
       \begin{equation}\label{uhpo}
       \text{$C(x,v,\delta)  \cap B(x,r) \cap A = \emptyset$, then $A$ is porous at $x$ in direction $v$.}
       \end{equation}

Moreover, we will need  the following results from \cite{Za2}.

       \begin{proposition}\label{smerdh}\ $($\cite[Proposition 3.2]{Za2}$)$\ 
 Let $X$ be a separable Banach space, $Y$ a Banach space, $G \subset X$ an open set, and $f: G \to Y$ a mapping. Let $M$ be the set of all
  $x \in G$ at which $f$ is Lipschitz  and there exists $v\in X$ such that     $f'_+(x,v)$ exists but $f'_{H+}(x,v)$
   does not exist. Then $M$ is $\sigma$-directionally porous.
   \end{proposition}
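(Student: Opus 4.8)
The plan is to decompose $M$ into countably many pieces on which the pointwise Lipschitz behaviour of $f$ is \emph{quantitatively} bounded, and to show that each piece is directionally porous. Two reductions are available at every $x\in M$: since $f$ is Lipschitz at $x$, Lemma~\ref{hnli} shows that $f'_{H+}(x,0)$ exists, so any direction $v$ witnessing $x\in M$ is nonzero; and by Fact~\ref{f}(iv), applied to both $f'_+$ and $f'_{H+}$ with $t=1/\|v\|$, the vector $v/\|v\|$ is again such a witness, so we may assume $v\in S_X$.

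For $K,m\in\N$ set
\[
 M_{K,m}:=\Bigl\{x\in M:\ B\bigl(x,\tfrac1m\bigr)\subset G\ \text{and}\ \|f(z)-f(x)\|\le K\|z-x\|\ \text{whenever}\ \|z-x\|<\tfrac1m\Bigr\}.
\]
Because $f$ is Lipschitz at each point of $M$, we have $M=\bigcup_{K,m\in\N}M_{K,m}$, a countable union, so it suffices to prove that each $M_{K,m}$ is directionally porous. Fix $x\in M_{K,m}$ and a unit vector $v$ for which $L:=f'_+(x,v)$ exists but $f'_{H+}(x,v)$ does not. By Fact~\ref{f}(ii) the Hadamard derivative, if it existed, would equal $L$; hence there are $\varepsilon_0>0$, reals $t_k\to0+$ and vectors $z_k\to v$ with $\|t_k^{-1}(f(x+t_kz_k)-f(x))-L\|\ge\varepsilon_0$ for every $k$. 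If $z_k=v$ for infinitely many $k$, those indices give $t_k^{-1}(f(x+t_kv)-f(x))\to L$, which is impossible; so we may assume $z_k\ne v$, and then $s_k:=\|z_k-v\|>0$ with $s_k\to0$.

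Now the core step. Since $t_k^{-1}(f(x+t_kv)-f(x))\to L$, for all large $k$ we get $\|f(x+t_kz_k)-f(x+t_kv)\|\ge\frac{\varepsilon_0}{2}t_k$, while the two base points satisfy $\|(x+t_kz_k)-(x+t_kv)\|=t_ks_k=o(t_k)$ and both tend to $x$. Put $p:=\varepsilon_0/(16K)$. I claim $B(x+t_kv,pt_k)\cap M_{K,m}=\emptyset$ for all large $k$. Since $v\ne0$ and $t_k\to0+$, this exhibits $M_{K,m}$ as porous at $x$ in the direction $v$, and as $x\in M_{K,m}$ was arbitrary, $M_{K,m}$ is directionally porous, which finishes the proof. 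To prove the claim, assume $y\in B(x+t_kv,pt_k)\cap M_{K,m}$ for some large $k$. Then $\|y-(x+t_kv)\|<pt_k$ and $\|y-(x+t_kz_k)\|<pt_k+t_ks_k$, and for $k$ large both quantities are $<1/m$; since $B(y,1/m)\subset G$ and $y$ obeys the defining inequality of $M_{K,m}$,
\[
 \|f(x+t_kz_k)-f(x+t_kv)\|\le K\|(x+t_kz_k)-y\|+K\|y-(x+t_kv)\|\le K(pt_k+t_ks_k)+Kpt_k=Kt_k(2p+s_k).
\]
As $2Kp=\varepsilon_0/8$ and $s_k\to0$, the right-hand side is $<\frac{\varepsilon_0}{2}t_k$ for $k$ large, contradicting the lower bound.

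I expect the substance here to lie in the organisation rather than in any single estimate: one must verify that $M=\bigcup M_{K,m}$ really is a countable decomposition, and that, although the data $\varepsilon_0,(t_k),(z_k)$ depend on the chosen point $x$, the porosity radius $p$ depends only on $K$ and $\varepsilon_0$ and all the needed inequalities hold simultaneously for every sufficiently large $k$. It is worth stressing the feature that keeps the argument valid for a completely arbitrary, possibly nonseparable, target $Y$: we never pass to a limit of difference quotients in $Y$; we only combine the triangle inequality with the pointwise Lipschitz bound of $M_{K,m}$ at $y$, exploiting that the anomalous increment $\|f(x+t_kz_k)-f(x+t_kv)\|$, of size at least $\tfrac{\varepsilon_0}{2}t_k$, is large compared with the distance $t_ks_k=o(t_k)$ between the two nearby base points.
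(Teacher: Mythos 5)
Your argument is correct. Note that the paper itself does not prove this proposition; it imports it from \cite[Proposition 3.2]{Za2}, so there is no in-paper proof to compare against. Your proof is a complete and valid substitute, and it is in the same style as the porosity arguments the paper does carry out: the decomposition $M=\bigcup_{K,m}M_{K,m}$ by quantitative pointwise Lipschitz data is a legitimate countable decomposition; the extraction of $\varepsilon_0$, $t_k\to0+$, $z_k\to v$ from the failure of the Hadamard limit is sound (by Fact \ref{f}(ii) that limit could only converge to $L=f'_+(x,v)$, so its nonexistence is exactly the failure of convergence to $L$); and the contradiction obtained by applying the Lipschitz estimate at the hypothetical point $y\in B(x+t_kv,pt_k)\cap M_{K,m}$ --- rather than at $x$ --- is precisely the ``use the Lipschitz condition at the nearby point'' trick that drives the proof of Lemma \ref{rozd}. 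The quantifier bookkeeping you flag at the end is indeed the only delicate point, and it checks out: the threshold $k_0$ depends only on the rate of convergence of the difference quotient at $x$, on $s_k\to0$, $t_k\to0$, and on $K,m,\varepsilon_0$, not on $y$. One genuine difference worth recording: your decomposition indexes only over $(K,m)$ and lets the porosity direction $v$ vary from point to point (which the definition of directional porosity permits), so you never pass through a countable dense set of directions; consequently your proof does not use separability of $X$ at all and yields the proposition for an arbitrary Banach space $X$, and it likewise places no separability or continuity demands on $Y$.
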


   \begin{proposition}\label{cll}\ (\cite[Proposition 3.1]{Za2})\ 
    Let $X$ be a separable Banach space, $Y$ a Banach space, $G \subset X$ an open set, and $f:G \to Y$ a mapping. Let
     $A$ be the set of all points $x \in G$ for which there exists a cone $C= C(x,v,\delta)$ such that
     $\limsup_{y \to x, y \in C} \frac{\|f(y)-f(x)\|}{\|y-x\|} < \infty$ and $f$ is not Lipschitz at $x$.
      Then $A$  is a  $\sigma$-directionally porous  set.
       \end{proposition}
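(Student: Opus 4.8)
The plan is a ``$\sigma$-porosity via quantization'' argument together with one scale-choice observation. First I would fix a countable dense set $\{v_i\}_{i\in\N}\subset S_X$ and, for $i,n,j,p\in\N$, define $A_{i,n,j,p}$ to be the set of all $x\in G$ such that $B(x,1/p)\subset G$, $f$ is not Lipschitz at $x$, and
$$\|f(w)-f(x)\|\le n\|w-x\|\qquad\text{whenever }w\in C(x,v_i,1/j)\cap B(x,1/p),\ w\neq x.$$
Using the elementary inclusion $C(x,v',\delta')\subset C(x,v,\delta)$ valid when $\|v'-v\|+\delta'<\delta$, one checks routinely that $A=\bigcup_{i,n,j,p}A_{i,n,j,p}$: given $x\in A$, from the finiteness of the $\limsup$ over its witnessing cone $C(x,v,\delta)$ pick $\eta>0$ with $B(x,\eta)\subset G$ and $K<\infty$ with $\|f(w)-f(x)\|\le K\|w-x\|$ for $w\in C(x,v,\delta)\cap B(x,\eta)$, $w\neq x$; then choose $v_i$ with $\|v_i-v\|<\delta/2$, $j$ with $1/j<\delta/2$, $p$ with $1/p<\eta$, and $n\ge K$, so that $x\in A_{i,n,j,p}$. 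Hence it suffices to prove that each $A_0:=A_{i,n,j,p}$ is directionally porous; abbreviate $v:=v_i$, $\delta:=1/j$, $r:=1/p$, $K:=n$.

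Next I would fix $x\in A_0$ and show that $A_0$ is porous at $x$ in the direction $-v$, which in particular gives directional porosity at $x$. Suppose, for contradiction, that it is not; fix $\omega>0$ small (concretely $\omega<\delta/8$). By the characterization \eqref{nepo} of non-porosity there is $\delta'>0$ such that for every $t\in(0,\delta')$ one can pick $z=z(t)\in A_0$ with $\|z-(x-tv)\|<\omega t$. The reason for aiming at the direction $-v$ is that such a $z$ sits on a thin cone issuing from $x$ about the axis $-v$, so that \emph{both} $x$ and every $y$ close to $x$ are seen from $z$ in a direction very close to $v$; hence the single cone inequality defining $A_0$ at the point $z$ controls $f(x)-f(z)$ and $f(y)-f(z)$ at once. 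Concretely, given $y\in G$ with $\rho:=\|y-x\|$ small, I would put $t:=\rho/\omega$ (so that $t\in(0,\delta')$ and the balls below lie in $B(z,r)$ once $\rho$ is small), pick $z=z(t)$, and write $x-z=tv-e$ and $y-z=tv+(y-x-e)$, where $\|e\|<\omega t$ and $\|y-x-e\|\le 2\omega t$. Elementary length and angle estimates then show that $(x-z)/\|x-z\|$ and $(y-z)/\|y-z\|$ both lie within $\delta$ of $v$ (this is where $\omega<\delta/8$ enters), that $\|x-z\|,\|y-z\|<r$, and that $\|x-z\|+\|y-z\|\le(2/\omega+3)\rho$. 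Hence $x,y\in(C(z,v,\delta)\cap B(z,r))\setminus\{z\}$, and applying the defining inequality of $A_0$ at $z$ twice,
$$\|f(y)-f(x)\|\le\|f(y)-f(z)\|+\|f(z)-f(x)\|\le K(\|y-z\|+\|x-z\|)\le K(2/\omega+3)\,\rho,$$
so that $\limsup_{y\to x}\|f(y)-f(x)\|/\|y-x\|\le K(2/\omega+3)<\infty$; that is, $f$ is Lipschitz at $x$, contradicting $x\in A_0$. Thus each $A_0$ is directionally porous, and $A=\bigcup_{i,n,j,p}A_{i,n,j,p}$ is $\sigma$-directionally porous.

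I expect the main obstacle to be organizational rather than analytic: the decomposition must be arranged so that, for $x$ and the comparison point $z$ lying in one and the same piece $A_0$, the cone controlling $f$ near $z$ has the \emph{same} axis $v$ and the \emph{same} quantized opening $\delta$ and constant $K$ as the one controlling $f$ near $x$ --- which is precisely why one quantizes the cone axis against a fixed countable dense subset of $S_X$ rather than carrying the $x$-dependent data through the argument. The geometric estimates themselves (the lengths and angular deviations of $x-z$ and $y-z$) are routine, and the one genuine idea is the scale choice $t=\rho/\omega$: placing the comparison point $z$ at distance much larger than $\rho$ from $x$ while still letting it tend to $x$ is what forces the two required cone memberships to hold at the same time.
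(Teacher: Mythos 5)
Your proof is correct: the decomposition $A=\bigcup_{i,n,j,p}A_{i,n,j,p}$ obtained by quantizing the cone axis over a countable dense subset of $S_X$ and the cone opening, radius and constant over $\N$, followed by showing each piece is porous at each of its points in the direction $-v$ (so that a nearby point $z$ of the same piece sees both $x$ and any $y$ close to $x$ inside its own cone, with $\|x-z\|+\|y-z\|\le(2/\omega+3)\|y-x\|$), is exactly the standard argument, and the numerical choices ($\omega<\delta/8$, $t=\rho/\omega$) do make all the angular and length estimates go through. Note that the paper itself does not prove this proposition but cites it from \cite[Proposition 3.1]{Za2}, where the proof is essentially the one you give.
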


   \section{Main results}   
    
   \begin{lemma}\label{rozd}
    Let $X$ be a separable Banach space, $Y$ a Banach space, $G \subset X$ an open set, and $f: G \to Y$ a mapping.
     Let $A$ be the set of all $x \in G$ at which $f$ is Lipschitz, and for which there exist $v, w \in X$ such that
      $f'_+(x,v)$ and $f'_+(x,w)$ exist but either $f'_+(x, v-w)$ does not exist or $f_+'(x, v-w) \neq f'_+(x,v)- f'_+(x,w)$. Then $A$ is a $\sigma$-directionally porous set.
    \end{lemma}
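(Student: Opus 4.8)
The strategy is to reduce everything to a countable decomposition indexed by rational/countable data, and to show each piece is directionally porous. The key "small trick" the author already advertised is that one-sided derivatives at nearby points are automatically close. So here is how I would set it up. Fix a countable dense set $\{v_i\}$ in $X$ and, for each pair $(i,j)$, consider the set $A_{i,j}$ of points $x\in A$ at which $f'_+(x,v_i)$ and $f'_+(x,v_j)$ exist but $f'_+(x,v_i-v_j)$ fails to equal $f'_+(x,v_i)-f'_+(x,v_j)$ (including the case where it does not exist). Since $A=\bigcup_{i,j}A_{i,j}$ (any offending pair $v,w$ can, using positive homogeneity in Fact \ref{f}(iv) and continuity/approximation, be replaced by dense directions — this needs a small argument, possibly also shrinking to rational scalings and using that $f$ is Lipschitz at $x$ so the quotients are bounded by Fact \ref{f}(v)), it suffices to show each $A_{i,j}$ is $\sigma$-directionally porous. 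Fix such a pair and write $v=v_i$, $w=w_j$, $z=v-w$.

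Next, stratify $A_{i,j}$ further by the "defect": there is $\eta>0$ and a subsequence witnessing
$$\Bigl\| \frac{f(x+t z)-f(x)}{t} - \bigl(f'_+(x,v)-f'_+(x,w)\bigr)\Bigr\| \ge \eta$$
for arbitrarily small $t>0$ (if $f'_+(x,z)$ exists and merely differs, this holds for all small $t$; if it does not exist, pass to such a subsequence). Also stratify by an integer $N$ bounding the local Lipschitz constant of $f$ at $x$ and by $\rho>0$ such that the relevant difference quotients in directions $v,w$ are within $\eta/10$ of their limits whenever $0<t<\rho$ — this is where Lipschitzness at $x$ together with the existence of $f'_+(x,v)$, $f'_+(x,w)$ enters. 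Call the resulting piece $A_{i,j,\eta,N,\rho}$; it is a countable union over the index data, so it is enough to prove this piece is directionally porous at each of its points.

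The core estimate is the following. Let $x$ be a point of this piece and let $t_n\downarrow 0$ be the witnessing sequence. Consider the ball $B(x+t_n z,\; p t_n)$ for a small $p>0$ to be chosen. If some $y$ in this ball also lay in the same piece, then writing $y=x+t_n z+t_n e$ with $\|e\|<p$, we would compare three difference quotients: $\frac{f(y)-f(x)}{t_n}$, $\frac{f(x+t_n v)-f(x)}{t_n}$, and $\frac{f(y)-f(x+t_n v)}{t_n}$. Using $N$-Lipschitzness along the short segments (the segment from $x+t_nz$ to $y$ has length $<p t_n$, and $y-(x+t_nv) = -t_n w + t_n e$ is a small perturbation of $-t_n w$), together with the fact that at the point $x+t_n v$ — wait, more cleanly: use that at $y$ the one-sided derivatives in directions $v$ and $w$ exist and, by the smallness-of-$\rho$ stratification applied now at $y$ (shrinking $t_n$ further if needed so $t_n<\rho_y$, which is legitimate since we only need the porosity direction, not a uniform one), the quotient $\frac{f(x+t_nz)-f(x)}{t_n}$ is forced within $\eta/2$ of $f'_+(x,v)-f'_+(x,w)$, contradicting the defect bound $\eta$. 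Choosing $p$ small enough (depending only on $N,\eta$) makes the perturbation terms smaller than $\eta/10$, so the contradiction is genuine; hence $B(x+t_n z, p t_n)\cap A_{i,j,\eta,N,\rho}=\emptyset$, i.e. the piece is porous at $x$ in direction $z$ (note $z=v-w\ne 0$ in the offending case, since otherwise the derivative identity is trivial).

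**Main obstacle.** The delicate point is making the "automatic closeness" rigorous: I must transfer information about difference quotients at $x$ to difference quotients at the nearby point $y$ in the porosity ball, using only that $y$ lies in the same stratified piece (so $f'_+(y,v)$, $f'_+(y,w)$ exist and $f$ is $N$-Lipschitz at $y$) — and crucially the existence of $f'_+(y,\cdot)$ for $y$ gives control of $f(y+sv)$, $f(y+sw)$ for small $s$, not directly of $f(x+t_nz)$. The bridge is that $x = y - t_n z - t_n e$, so $f(x)$ is reached from $y$ by a direction close to $-z = w-v$; combining the $N$-Lipschitz estimate along the $t_ne$-perturbation with the derivative bounds forces $\frac{f(y)-f(x)}{t_n}$ near $f'_+(y,v)-f'_+(y,w)$, and then a fresh application at $x$ itself (where the derivatives also exist) closes the loop. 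Getting the quantifiers on $t_n$, $\rho$, $p$ in the right order — $p$ depending only on $(N,\eta)$, while $\rho$ and the sequence may depend on the point — is the part that requires care, and it is exactly the role of the preliminary stratification.
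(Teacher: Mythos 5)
Your outline founders on two points, both of which are precisely where the paper has to work hardest. First, the opening reduction is not valid: you cannot replace an offending pair $v,w$ by nearby directions $v_i,v_j$ from a countable dense set, because the existence of $f'_+(x,v)$ gives no information whatsoever about the existence of $f'_+(x,v')$ for $v'$ arbitrarily close to $v$ — Lipschitzness at $x$ only bounds the difference quotients (Fact \ref{f}(v)), it does not make their limits exist in perturbed directions. (That the set $U_x$ of good directions is even closed, off a small set, is a separate lemma in the paper, and closedness would not help here anyway.) The paper keeps $v_x,w_x$ arbitrary and uses the countable dense set $\{d_j\}$ only to guarantee that the vectors $w_x$ and $w_{y}$ attached to two points of the same piece are within $1/(2k)$ of \emph{each other}.

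Second, and more fundamentally, the "bridge" in your core estimate does not close. To control $f$ at a point of the form $y+t(w-e)$ with $\|e\|<p$ you need the difference quotient at $y$ to be stable under a perturbation of the \emph{direction}; the usual one-sided derivative $f'_+(y,w)$ controls only the exact ray $y+tw$, and pointwise Lipschitzness at $y$ bounds $\|f(u)-f(y)\|$, not $\|f(u)-f(u')\|$ for two points $u,u'$ both near $y$ — so the perturbation term cannot be absorbed. The paper resolves this by first discarding the $\sigma$-directionally porous set $M$ of Proposition \ref{smerdh}, after which every existing $f'_+(y,w_y)$ is automatically a Hadamard derivative $f'_{H+}(y,w_y)$, giving exactly the required uniformity in the direction (inequality \eqref{powx}); your proposal never invokes this step and cannot do without it. The same issue undermines the phrase "a fresh application at $x$ closes the loop": you need $f'_+(x,v)-f'_+(x,w)$ close to $f'_+(y,v)-f'_+(y,w)$, there is no continuity of $x\mapsto f'_+(x,\cdot)$ to appeal to, and stratifying by approximate values of the derivatives is unavailable because $Y$ may be nonseparable (the paper points this out explicitly). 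The paper's substitute is the comparison of the two difference quotients toward a single fixed point $\xi=x+t^*w_x$ at a fixed scale $t^*=(2k)^{-1}$, which again runs through the Hadamard derivative at $y$. Without these two ingredients the contradiction with the defect bound cannot be derived.
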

 \begin{proof}
 Let $M$ be the $\sigma$-directionally porous set from Proposition \ref{smerdh}. It is sufficient to prove that
  $A^*:= A \setminus M$ is $\sigma$-directionally porous.  To each $x \in A$, choose a corresponding pair
   $v=v_x$, $w=w_x$.  Let $\{d_j: j \in \N\}$ be a dense subset of $X$.
  
  Now consider an arbitrary $x \in A^*$. We can choose $n_x \in \N$ such that
  \begin{equation}\label{enix}
  \|f(y)-f(x)\| \leq n_x \, \|y-x\|\ \ \text{whenever}\ \ \|y-x\| \leq \frac{1}{n_x}.
  \end{equation}
  Further, we can choose $p_x \in \N$ and a sequence  $1> t_i^x \searrow 0$ such that, for each $i \in \N$,
  \begin{equation}\label{poti}
  \left \| \frac{f(x+t_i^x(v_x-w_x)) - f(x)}{t_i^x} - (f'_+(x,v_x) - f'_+(x, w_x))\right\| > \frac{6}{p_x}.
  \end{equation}
  Since $x \notin M$, we have $f'_+(x,w_x) = f'_{H+}(x,w_x)$ and so we can choose $k_x \in \N$ such that
  \begin{equation}\label{powx}
  \left \| \frac{f(x+tz) - f(x)}{t} - f'_+(x, w_x)\right\|< \frac{1}{p_x}\ \ \text{whenever}\ \ \|z-w_x\|< \frac{1}{k_x}\ \ \text{and}\ \ 0< t < \frac{1}{k_x},
  \end{equation}
 and
  \begin{equation}\label{povx}
  \left \| \frac{f(x+tv_x) - f(x)}{t} - f'_+(x, v_x)\right\|< \frac{1}{p_x}\ \ \text{whenever}\ \  0< t < \frac{1}{k_x}.
  \end{equation}
 Finally choose $j_x \in \N$ such that $\|w_x - d_{j_x}\| <  (4 k_x)^{-1}$.
 
 Now, for natural numbers $n, p, k, j$ denote by $A^*_{n,p,k,j}$ the set of all $x \in A^*$ for which
  $n_x=n$, $p_x=p$, $k_x=k$, $j_x=j$.  It is clearly sufficient to prove that, for any fixed quadruple $n,p,k,j$, the set  $S:= A^*_{n,p,k,j}$ is directionally porous.
  
  So fix an arbitrary $x \in S$. 
   Denote $t_i := t_i^x$ and choose $\eta$  such that
   \begin{equation}\label{eta}
   0 < \eta < \min( (pn)^{-1}, (2k)^{-1}).
   \end{equation}
   
   We will prove that $S$ is porous at $x$ in the direction $v_x-w_x$. 
  
  To show this, it is sufficient to prove that there exists $i_0 \in \N$ such that
  \begin{equation}\label{prpr}
  S \cap B(x+t_i(v_x-w_x),\eta t_i) = \emptyset\ \ \text{whenever }\ \ i \geq i_0.
  \end{equation}
  
  To this end, consider $i \in \N$ and $y_i \in S \cap B(x+t_i(v_x-w_x),\eta t_i)$.
  
  First observe that $\|(x + t_i(v_x-w_x))-y_i\| < \eta t_i < \eta < (pn)^{-1}$ and so \eqref{enix} (with $x:=y_i$
   and $y:= x + t_i(v_x-w_x)$) implies
   \begin{equation}\label{yibl}
   \|f(x + t_i(v_x-w_x))-f(y_i)\| \leq n \eta t_i < \frac{t_i}{p}.
     \end{equation}

  Now we will show (and this is the main trick of the proof) that, if $i$ is sufficiently large,
   then the difference of $f'_+(x, w_x)$ and $f'_+(y_i, w_{y_i})$ is ``small'' (see \eqref{auto}). To this end, set $t^*:= (2k)^{-1}$
    and $\xi: = x+ t^* w_x$.  By \eqref{powx} we immediately obtain 
    \begin{equation}\label{xxi}
    \left \| \frac{f(\xi) - f(x)}{t^*} - f'_+(x, w_x)\right\|< \frac{1}{p}.
    \end{equation}
    Further set $z^* := (t^*)^{-1} (\xi-y_i)$. Then
    $$ \|z^*- w_{y_i}\| \leq \|z^*-w_x\| + \|w_x- d_j\|+ \|w_{y_i}-d_j\| \leq \|z^*-w_x\| + \frac{1}{2k}$$
 and
 $$ \|z^*-w_x\| = \|(t^*)^{-1} (\xi-y_i) - (t^*)^{-1}(\xi -x)\|= \|(t^*)^{-1} (x-y_i)\|\leq 2k t_i (\|v_x - w_x\| + \eta).$$
 So there exists $i_1 \in \N$ such that $i \geq i_1$ implies $ \|z^*- w_{y_i}\| < 1/k$, and consequently also
  (by \eqref{powx} with $x: =y_i \in S$)
  \begin{equation}\label{yixi}
   \left \| \frac{f(\xi) - f(y_i)}{t^*} - f'_+(y_i, w_{y_i})\right\| =  \left \| \frac{f(y_i +t^*z^*) - f(y_i)}{t^*} -      f'_+(y_i, w_{y_i}) \right\|< \frac{1}{p}.
   \end{equation}
   Further there exists $i_1 <i_2 \in \N$ such that $i \geq i_2$ implies 
   $$\|x-y_i\|\leq t_i (\|v_x - w_x\| + \eta) <  (2knp)^{-1} < n^{-1}$$
    and consequently also (by \eqref{enix})
   $$ \left\| \frac{f(\xi) - f(x)}{t^*} -  \frac{f(\xi) - f(y_i)}{t^*}\right\| = 2k \|f(y_i) - f(x)\| \leq 2kn \|y_i-x\|
    < \frac{1}{p}.$$
    Using this together with \eqref{xxi} and \eqref{yixi}, we obtain that $i \geq i_2$ implies
    \begin{equation}\label{auto}
     \|f'_+(x, w_x) - f'_+(y_i, w_{y_i})\| < \frac{3}{p}.
    \end{equation}

   Denote  $a_i:= x + t_i v_x$. There exists $i_0>i_2$ such that $i \geq i_0$ implies $t_i < 1/k$, and consequently
    also  (by \eqref{povx}) 
    \begin{equation}\label{xzi}
    \left\| \frac{f(a_i)-f(x)}{t_i}- f'_+(x,v_x)\right\| < \frac{1}{p}.
       \end{equation} 
    Set  $z_i:= (t_i)^{-1} ( a_i -y_i)= (t_i)^{-1} (x+ t_i v_x - y_i)$.  Then
    $$ \|z_i- w_{y_i}\| \leq \|z_i-w_x\| + \|w_x- d_j\|+ \|w_{y_i}-d_j\| \leq \|z_i-w_x\| + \frac{1}{2k}$$
    and
    $$ \|z_i-w_{x}\| = \left\|\frac{x+t_i v_x -y_i}{t_i} -w_x\right\| =
     \left\|\frac{x+t_i(v_x-w_x) -y_i)}{t_i}\right\| < \eta < \frac{1}{2k}. $$ 
   So $ \|z_i- w_{y_i}\| < 1/k$ and $i \geq i_0$ implies $t_i < 1/k$, and consequently (by \eqref{powx} with $x: =y_i)$)
   \begin{equation}\label{ziyi}
   \left \| \frac{f(a_i) - f(y_i)}{t_i} - f'_+(y_i, w_{y_i})\right\| =  \left \| \frac{f(y_i +t_iz_i) - f(y_i)}{t_i} -      f'_+(y_i, w_{y_i}) \right\|< \frac{1}{p}.
   \end{equation}
   Thus, if $i \geq i_0$, then \eqref{yibl}, \eqref{auto}, \eqref{xzi} and \eqref{ziyi} imply
    \begin{multline*}
     \left \| \frac{f(x + t_i(v_x-w_x)) - f(x)}{t_i} - (f'_+(x,v_x) - f'_+(x, w_x))\right\|\\
     \leq
  \left \| \frac{f(y_i) - f(x)}{t_i} - (f'_+(x,v_x) - f'_+(x, w_x))\right\| + \frac{1}{p} \\
  \leq
  \left \| \frac{f(y_i) - f(x)}{t_i} - (f'_+(x,v_x) - f'_+(y_i, w_{y_i}))\right\| + \frac{4}{p} \\ =
  \left \| \left(\frac{f(a_i) - f(x)}{t_i} - f'_+(x,v_x)\right)+ \left(f'_+(y_i, w_{y_i}) - \frac{f(a_i) - f(y_i)}{t_i}\right)  \right\| + \frac{4}{p} < \frac{6}{p},
  \end{multline*}
 which contradicts \eqref{poti}. So we have proved \eqref{prpr}. 
 \end{proof}

   \begin{lemma}\label{uzav}
    Let $X$, $Y$ be Banach spaces, $G \subset X$ an open set,  $f: G \to Y$ a mapping. For each $x \in G$ denote
     by $U_x$ the set of all $v \in X$ such that $f'_+(x,v)$ exists. Then the set $B$ of all $x \in G$ such that
     $f$ is Lipschitz at $x$ and $U_x$ is not closed   is $\sigma$-directionally porous.
       \end{lemma}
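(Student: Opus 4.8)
The plan is to show that at points of $B$, the failure of $U_x$ to be closed — combined with $f$ being Lipschitz at $x$ — forces a directional porosity. First I would isolate the "bad" behavior: if $x \in B$, then there is a sequence $v_m \in U_x$ converging to some $v \in X \setminus U_x$. Since $f$ is Lipschitz at $x$, Fact \ref{f}(v) gives $\|f'_+(x,v_m)\| \le K_x \|v_m\|$ for the pointwise Lipschitz constant $K_x$, so $\{f'_+(x,v_m)\}$ is bounded; but more importantly, the fact that $f'_+(x,v)$ does {\it not} exist means that the difference quotients $\frac{f(x+tv)-f(x)}{t}$ fail to converge as $t \to 0+$. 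Quantitatively: there exist $c_x > 0$ and sequences $s_i \searrow 0$, $s_i' \searrow 0$ along which these quotients stay at distance $\ge c_x$ from each other (or, using that $Y$ is complete, at distance $\ge c_x$ from the would-be limit along any fixed convergent subsequence). The strategy is then to convert this oscillation into a porosity statement by comparing, at nearby points $y \in B$, the derivative $f'_+(y, v_{m(y)})$ for a suitable $v_m$ close to $v$.

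Second, I would run a discretization argument in the spirit of the proof of Lemma \ref{rozd}. To each $x \in B$ attach a Lipschitz bound $n_x$ as in \eqref{enix}, a modulus $p_x$ controlling the oscillation in \eqref{poti}-style inequalities, a radius parameter $k_x$, an index $m_x$ with $\|v - v_{m_x}\| < (4k_x)^{-1}$, and an index $j_x$ with $\|v_{m_x} - d_{j_x}\|$ small (so that for two nearby points $x, y \in B$ whose parameters agree, the directions $v_{m_x}$ and $v_{m_y}$ are forced to be close, hence $z := t^{-1}(x + t v_{m_x} - y)$ lies close to $v_{m_y}$). Since $v_{m_x} \in U_x$, existence of $f'_+(x, v_{m_x})$ lets me control the quotients $\frac{f(x + t v_{m_x}) - f(x)}{t}$; the "automatic closeness" trick of Lemma \ref{rozd} (comparing $f'_+(x, v_{m_x})$ with $f'_+(y, v_{m_y})$ via an auxiliary point $\xi = x + t^* v_{m_x}$ and the Lipschitz estimate \eqref{enix}) should carry over essentially verbatim. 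Stratifying $B$ by the quadruple $(n_x, p_x, k_x, j_x)$ (and the extra index $m_x$, or folding it into $k_x$), it suffices to show each piece $S$ is directionally porous.

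Third, fix $x \in S$ and use the oscillation: there are $t_i \searrow 0$ with $\bigl\|\frac{f(x + t_i v) - f(x)}{t_i} - f'_+(x, v_{m_x})\bigr\|$ large (since $v_{m_x} \to v$ but the quotient along $v$ does not converge to $f'_+(x, v_{m_x})$, after passing to a subsequence we can arrange this gap to exceed, say, $6/p_x$). I claim $S$ is porous at $x$ in direction $v$: for large $i$, the ball $B(x + t_i v, \eta t_i)$ meets no point $y_i$ of $S$, because if it did, the Lipschitz bound would give $\|f(x + t_i v) - f(y_i)\|$ small, the direction $z_i := t_i^{-1}(x + t_i v_{m_x} - y_i)$ would be within $1/k_x$ of $v_{m_y}$ wait — I should compare with $x + t_i v$ rather than $x + t_i v_{m_x}$; the point is that $\|v - v_{m_x}\|$ is already $< (4k_x)^{-1}$, so $z_i := t_i^{-1}(x + t_i v_{m_x} - y_i)$ is within $\eta + \|v - v_{m_x}\| < 1/k_x$ of $v_{m_x}$ hence within $1/k_x$ of $v_{m_y}$ once parameters agree, and then \eqref{powx}-type estimates at $y_i$ pin down $\frac{f(a_i) - f(y_i)}{t_i}$ where $a_i = x + t_i v_{m_x}$; combining with the small error $\|f(a_i) - f(x + t_i v)\| \le n_x t_i \|v_{m_x} - v\|$ and the automatic-closeness estimate, we would force $\frac{f(x + t_i v) - f(x)}{t_i}$ to be close to $f'_+(x, v_{m_x})$, contradicting the oscillation.

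The main obstacle is the first, qualitative step: unlike Lemma \ref{rozd} (where non-additivity of an {\it existing} family of derivatives provides a concrete witnessing quotient at a given scale), here the derivative $f'_+(x,v)$ simply fails to exist, so I must extract a genuinely usable quantitative oscillation that survives the approximation $v \approx v_{m_x}$ — i.e. I need the gap between $\frac{f(x+t_i v)-f(x)}{t_i}$ and the {\it single} value $f'_+(x, v_{m_x})$ to be bounded below, not merely the gap between two subsequences along $v$. This requires choosing $m_x$ large enough (so $\|v - v_{m_x}\|$ is small relative to $c_x/n_x$) {\it before} fixing the scales $t_i$, and checking the Cauchy-failure of the $v$-quotients transfers, via the pointwise Lipschitz bound, to a persistent distance from $f'_+(x,v_{m_x})$; this interplay of the order of quantifiers is the delicate point. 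Everything after that is a routine adaptation of the porosity bookkeeping already executed in Lemma \ref{rozd}.
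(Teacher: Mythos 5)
Your high-level instinct --- stratify $B$ by a uniform Lipschitz parameter, pick $v\in S_X\setminus U_x$ approximated by $v_m\in S_X\cap U_x$, and prove porosity at $x$ in the direction $v$ by exploiting stratum points found in the balls $B(x+tv,\eta t)$ --- is exactly the paper's. But the execution has a concrete error at the key step. Writing $y_i=x+t_iv+e_i$ with $\|e_i\|<\eta t_i$, the vector you probe with, $z_i:=t_i^{-1}(x+t_iv_{m_x}-y_i)=(v_{m_x}-v)-t_i^{-1}e_i$, is within $\eta+\|v-v_{m_x}\|$ of the \emph{zero} vector, hence at distance at least $\|v\|-\eta=1-\eta$ from $v_{m_x}$; so your claim that $z_i$ lies within $1/k_x$ of $v_{m_x}$ (and then of $v_{m_{y_i}}$) is false, and the \eqref{powx}-type estimate at $y_i$ cannot be applied. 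This miscalculation is a symptom of importing too much of Lemma \ref{rozd}: since $z_i$ is nearly zero, no directional derivative at $y_i$ is relevant at all --- the only thing $y_i$ can and need contribute is its uniform Lipschitz estimate, which lets you compare $f(x+t_iv)$ with $f(x+t_iv_{m_x})$ (two points close to each other and to $y_i$, but not to $x$, so the Lipschitz property at $x$ alone does not suffice). Consequently the whole apparatus of the dense set $\{d_j\}$, the indices $j_x$ and $m_x$, the ``automatic closeness'' of $f'_+(x,v_{m_x})$ and $f'_+(y_i,v_{m_{y_i}})$, and Proposition \ref{smerdh} (which your \eqref{powx}-type uniform moduli would implicitly require) is unnecessary for this lemma.

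The obstacle you flag as the delicate point --- extracting a quantitative oscillation of the quotients along $v$ that persists against the single value $f'_+(x,v_{m_x})$ --- is dissolved rather than solved in the paper: the argument runs contrapositively. One sets $B_k:=\{x\in B:\ \|f(y)-f(x)\|<k\|y-x\| \text{ whenever } \|y-x\|<1/k\}$ and assumes $B_k$ is \emph{not} porous at $x$ in direction $v$; then by \eqref{nepo} every ball $B(x+tv,\ep t/12k)$ meets $B_k$ for small $t$, and any such point $y$ serves as a Lipschitz anchor giving $\|f(x+tv)-f(x+tv_n)\|<t\ep/3$ once $\|v-v_n\|<\ep/12k$. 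Combined with the Cauchy property of the quotients along $v_n$ (available because $f'_+(x,v_n)$ exists), this shows the quotients along $v$ are Cauchy, hence $f'_+(x,v)$ exists by completeness of $Y$ --- contradicting $v\notin U_x$. No lower bound on any oscillation is ever needed, so the quantifier interplay you worry about never arises. If you wish to salvage your direct version, replace the false claim about $z_i$ by the Lipschitz estimate $\|f(x+t_iv_{m_x})-f(y_i)\|\le n\,t_i(\eta+\|v-v_{m_x}\|)$ and choose $k_x$ so large that $n_x/k_x$ is small compared with the Cauchy defect of the quotients along $v$; but at that point you will have reproduced the paper's proof with the contradiction merely relocated.
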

 \begin{proof}   
  For each $k \in \N$, set
   $$B_k:= \{x \in B:\ \|f(y)-f(x)\| < k \|y-x\|\ \ \text{ whenever}\ \ \|y-x\| <1/k\}.$$
   It is clearly sufficient to prove that each $B_k$ is directionally porous. So suppose that $k \in \N$
    and $x \in B_k$ are given. Since  $U_x$ is not closed, it is easy to see (using Fact \ref{f}(iv)) that we can find $v \in S_X \setminus U_x$
    and a sequence $v_n \to v$ with $v_n \in S_X \cap U_x$.
    
    We will show that 
    \begin{equation}\label{vesv}
    \text{$B_k$ is porous at $x$ in the direction $v$.}
    \end{equation}
    Suppose, to the contrary,  that \eqref{vesv} does not hold. We will obtain a contradiction by proving that $v \in U_x$,
     i.e., that  $\lim_{t\to 0+} t^{-1} (f(x+tv)- f(x))$ exists. Since $Y$ is a complete space, it is sufficient
      to prove that for each $\ep>0$ there exists $\delta>0$ such that
      \begin{equation}\label{bc}
      \left\|\frac{f(x+t_1v)-f(x)}{t_1} - \frac{f(x+t_2v)-f(x)}{t_2}\right\| < \ep\ \ \text{whenever}\ \ 0<t_1<t_2<\delta.
      \end{equation}
    So, let $1>\ep>0$ be given. Since \eqref{vesv} does not hold,  by \eqref{nepo} we can find $\delta_1 >0$ such that
     \begin{equation}\label{hust}
     \text{$B(x+tv, \ep t/12k) \cap B_k \neq \emptyset$  whenever  $0< t < \delta_1$.}
     \end{equation}
     Now choose $n \in \N$ such that $\|v - v_n\| <  \ep/12k$.
      Since $v_n \in U_x$, we can find $\delta_2 >0$ such that
     \begin{equation}\label{bc2}
      \left\|\frac{f(x+t_1v_n)-f(x)}{t_1} - \frac{f(x+t_2v_n)-f(x)}{t_2}\right\| <  \frac{\ep}{3}\ \ \text{whenever}\ \ 0<t_1<t_2<\delta_2.
      \end{equation}
      Put $\delta: = \min(\delta_1, \delta_2)$. It is sufficient to prove \eqref{bc}. To this end, let arbitrary numbers
      $0<t_1<t_2<\delta$ be given. By \eqref{bc2}, we have
      \begin{equation}\label{bc3}
       \left\|\frac{f(x+t_1v_n)-f(x)}{t_1} - \frac{f(x+t_2v_n)-f(x)}{t_2}\right\| <  \frac{\ep}{3}.
       \end{equation}
       By \eqref{hust}, we can choose points $y_1 \in B(x+t_1v, \ep t_1/12 k) \cap B_k$ and  $y_2 \in B(x+t_2v, \ep t_2/12 k) \cap B_k$.
        Observe that
      $$\|y_1 - (x+t_1v)\|< t_1 \ep/12k<1/k, \ \ \ 
       \|(x+t_1v)-(x+t_1v_n)\|= t_1\|v-v_n\| <  t_1 \ep/12 k,$$
        and therefore  $\|y_1 - (x+t_1v_n)\|< t_1 \ep/6 k< 1/k$.
      Since $y_1 \in B_k$, we obtain 
      $$\|f(y_1)- f(x+t_1v)\| < k t_1 \ep/12 k = t_1\ep/12, 
      \ \ \ \|f(y_1)- f(x+t_1v_n)\| < k t_1 \ep/6k = t_1\ep/6.$$ Consequently
      \begin{equation}\label{odt1}   
      \|f(x+t_1v)- f(x+t_1v_n)\| < 
        t_1\ep/3.
        \end{equation}
        By the same way we obtain
        \begin{equation}\label{odt2}   
      \|f(x+t_2v)- f(x+t_2v_n)\| <
        t_2\ep/3.
        \end{equation}
     So,  for $i\in \{1,2\}$ we obtain
     \begin{equation}\label{vvn}
      \left\|\frac{f(x+t_iv_n)-f(x)}{t_i} - \frac{f(x+t_iv)-f(x)}{t_i}\right\| <  \frac{\ep}{3}.
      \end{equation}
       Using  \eqref{bc3} and \eqref{vvn} we obtain
       $$\left\|\frac{f(x+t_1v)-f(x)}{t_1} - \frac{f(x+t_2v)-f(x)}{t_2}\right\| < \ep$$
        which completes the proof of \eqref{bc}.
      \end{proof}
      
      \begin{theorem}\label{obyc}
    Let $X$ be a separable Banach space, $Y$ a Banach space, $G \subset X$ an open set, and $f: G \to Y$ a mapping.
     Then there exists a  $\sigma$-directionally porous set $C \subset G$ such that if $x \in G \setminus C$ and
      $f$ is Lipschitz at $x$, then the set  $U_x$ of those directions $u \in X$ in which the one-sided derivative $f'_+(x,u)$ exists is a closed linear subspace of $X$. Moreover, the mapping $u \mapsto f'_+(x,u)$ is linear on $U_x$. 
           \end{theorem}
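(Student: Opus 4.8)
The plan is to assemble the statement from the two preceding lemmas together with the elementary homogeneity recorded in Fact~\ref{f}(iv). Let $A$ be the $\sigma$-directionally porous set furnished by Lemma~\ref{rozd} and let $B$ be the $\sigma$-directionally porous set furnished by Lemma~\ref{uzav}; put $C := A \cup B$, which is again $\sigma$-directionally porous. Now fix a point $x \in G \setminus C$ at which $f$ is Lipschitz. Since $x \notin B$ and $f$ is Lipschitz at $x$, the set $U_x$ is closed, so it remains only to verify that $U_x$ is a linear subspace and that $u \mapsto f'_+(x,u)$ is linear on it.

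First, $0 \in U_x$, because $f'_+(x,0) = \lim_{t\to 0+} t^{-1}(f(x)-f(x)) = 0$. Next I treat scalar multiples. If $u \in U_x$ and $t > 0$, then $tu \in U_x$ with $f'_+(x,tu) = t\,f'_+(x,u)$ by Fact~\ref{f}(iv). For the opposite direction, since $x \notin A$ and $f$ is Lipschitz at $x$, Lemma~\ref{rozd} applied to the pair $v := 0 \in U_x$, $w := u \in U_x$ shows that $f'_+(x, -u) = f'_+(x, 0-u)$ exists and equals $f'_+(x,0) - f'_+(x,u) = -f'_+(x,u)$; hence $-u \in U_x$. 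Combining these two observations gives $tu \in U_x$ and $f'_+(x,tu) = t\,f'_+(x,u)$ for every $t \in \R$.

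For additivity, take $u, v \in U_x$. By the previous paragraph $-v \in U_x$, so a further application of Lemma~\ref{rozd} (this time to the pair $u$ and $-v$, again using that $x \notin A$ and $f$ is Lipschitz at $x$) yields that $f'_+(x, u - (-v)) = f'_+(x, u+v)$ exists and equals $f'_+(x,u) - f'_+(x,-v) = f'_+(x,u) + f'_+(x,v)$. Thus $u+v \in U_x$ and $u \mapsto f'_+(x,u)$ is additive; together with the homogeneity this shows $U_x$ is a linear subspace and the derivative map is linear on it, and since $U_x$ is closed the proof is finished. From the point of view of this theorem there is no real obstacle: the only mildly subtle point is closure under negation, which does not follow from Fact~\ref{f}(iv) alone and genuinely needs Lemma~\ref{rozd} with a zero argument. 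All the actual work — the porosity estimates, in particular the ``automatic closeness'' trick in the proof of Lemma~\ref{rozd} and the Cauchy-sequence argument in Lemma~\ref{uzav} — has already been carried out, so the theorem is essentially a formal consequence.
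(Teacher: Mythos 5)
Your proposal is correct and follows essentially the same route as the paper's own proof: the same decomposition $C = A \cup B$ from Lemma~\ref{rozd} and Lemma~\ref{uzav}, positive homogeneity via Fact~\ref{f}(iv), negation via Lemma~\ref{rozd} with $v=0$, and additivity via Lemma~\ref{rozd} applied to $v$ and $w=-u$. No gaps.
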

     \begin{proof}
     Let $A$ be the set from Lemma \ref{rozd} and $B$ be the set from Lemma \ref{uzav}. We will show that it is sufficient to set $C:= A \cup B$. Obviously, $C$ is  $\sigma$-directionally porous. 
     
     Now consider an arbitrary $x \in G \setminus C = G \setminus (A\cup B)$ at which $f$ is Lipschitz. 
     It is obvious that $0 \in U_x$, $f'_+(x,0)=0$ and (see Fact \ref{f}(iv))
     \begin{equation}\label{poho}
     \text{if $u \in U_x$ and $t \geq 0$, then $tu \in U_x$ and $f'_+(x,tu) = t f'_+(x,u)$.}
    \end{equation}
    If $w \in U_x$, set $v:=0$ and obtain (since $x \notin A$)
    \begin{equation}\label{opac}
    f'_+(x,-w) = f'_+(x,0) - f'_+(x,w) = - f'_+(x,w).
    \end{equation}
    Obviously, \eqref{poho} and \eqref{opac} imply that
    \begin{equation}\label{homo}
     \text{if $u \in U_x$ and $t \in \R$, then $tu \in U_x$ and $f'_+(x,tu) = t f'_+(x,u)$.}
    \end{equation}
    Further consider arbitrary vectors $v, u \in U_x$. Setting $w:= -u$ and using $x \notin A$ and \eqref{opac},
     we obtain
     \begin{equation}\label{adit}
     f'_+(x, v+u) = f'_+(x,v-w)= f'_+(x,v) - f'_+(x,w) = f'_+(x,v) + f'_+(x,u).
     \end{equation}
     Thus \eqref{homo} and \eqref{adit} imply that $U_x$ is a linear subspace of $X$ and the mapping $u \mapsto f'_+(x,u)$ is linear on $U_x$.  Since $x \notin B$, we have that
      $U_x$ is closed.
     \end{proof}
     
     Theorem \ref{obyc} and Fact \ref{f}(i),(v) immediately imply the following corollaries.
     
     \begin{corollary}\label{liga} 
     Let $X$ be a separable Banach space, $Y$ a Banach space, $G \subset X$ an open set, and $f: G \to Y$ an arbitrary mapping. Then the following implication holds at each point $x \in G$ except a $\sigma$-directionally porous set:
       
 If $f$ is Lipschitz at $x$ and the one-sided directional derivative $f'_{+}(x,u)$ exists in all directions $u$ from a set $S_x \subset X$
  whose linear span is dense in $X$, then $f$ is G\^ ateaux differentiable at $x$.
          \end{corollary}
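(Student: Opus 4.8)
The plan is to read the corollary off directly from Theorem \ref{obyc} together with the elementary Fact \ref{f}. Let $C \subset G$ be the $\sigma$-directionally porous set furnished by Theorem \ref{obyc}; I will show that the stated implication holds at every $x \in G \setminus C$. So fix such an $x$, assume $f$ is Lipschitz at $x$, and suppose that $f'_+(x,u)$ exists for every $u$ belonging to some $S_x \subset X$ with $\lin S_x$ dense in $X$.

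First I would establish that $f$ has a one-sided directional derivative in \emph{every} direction, given by a \emph{linear} operator. By Theorem \ref{obyc}, the set $U_x$ of all $u \in X$ for which $f'_+(x,u)$ exists is a closed linear subspace of $X$ and $u \mapsto f'_+(x,u)$ is linear on it. Since $S_x \subseteq U_x$ and $U_x$ is a closed subspace, $U_x \supseteq \overline{\lin S_x} = X$; hence $f'_+(x,\cdot)$ is defined on all of $X$ and is a linear operator, call it $L$.

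Next I would pass from one-sided to two-sided derivatives and check continuity of $L$. Linearity of $L$ gives $f'_+(x,-u) = L(-u) = -L(u) = -f'_+(x,u)$ for every $u$, so by Fact \ref{f}(i) the (two-sided) directional derivative $f'(x,u)$ exists and equals $L(u)$ for all $u \in X$. Boundedness of $L$ is then immediate from Fact \ref{f}(v): choosing $K < \infty$ with $\limsup_{y \to x}\|f(y)-f(x)\|/\|y-x\| \le K$ we get $\|L(u)\| = \|f'_+(x,u)\| \le K\|u\|$ for all $u$. Thus $L$ is a continuous linear operator with $f'(x,u) = L(u)$ for every $u \in X$, i.e.\ $f$ is G\^ ateaux differentiable at $x$, as required.

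I do not expect a genuine obstacle here, since all the substantive work already sits in Lemmas \ref{rozd} and \ref{uzav} and in Theorem \ref{obyc}. The only points deserving attention are: (i) it is precisely the \emph{closedness} of $U_x$ (the content of Lemma \ref{uzav}) that lets one use the weaker hypothesis that $\lin S_x$ is merely \emph{dense}, rather than $\lin S_x = X$; and (ii) the pointwise Lipschitz assumption has to be invoked at the very end, through Fact \ref{f}(v), in order to conclude that the limit operator $L$ is bounded.
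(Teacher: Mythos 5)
Your argument is exactly the paper's intended derivation: the author states that Corollary \ref{liga} follows immediately from Theorem \ref{obyc} together with Fact \ref{f}(i),(v), and your proof simply spells out those same steps (density plus closedness gives $U_x=X$, Fact \ref{f}(i) upgrades to two-sided derivatives, Fact \ref{f}(v) gives boundedness). It is correct and matches the paper's approach.
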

     \begin{corollary}\label{plux}
     Let $X$ be a separable Banach space, $Y$ a Banach space, $G \subset X$ an open set, and $f: G \to Y$ a pointwise Lipschitz mapping. Then there exists a $\sigma$-directionally porous set $A \subset G$ such that for every $x \in G \setminus A$
 the set $U_x$ of those directions $u \in X$ in which the one-sided derivative $f'_+(x,u)$ exists is a closed linear subspace of $X$. Moreover, the mapping $u \mapsto f'_+(x,u)$ is linear on $U_x$. 
      \end{corollary}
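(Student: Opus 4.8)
The plan is to deduce this corollary directly from Theorem \ref{obyc}, where all the real work has already been done. First I would invoke Theorem \ref{obyc} for the given $X$, $Y$, $G$ and $f$: this produces a $\sigma$-directionally porous set $C \subset G$ such that for every $x \in G \setminus C$ \emph{at which $f$ is Lipschitz}, the set $U_x$ of directions in which $f'_+(x,u)$ exists is a closed linear subspace of $X$ and $u \mapsto f'_+(x,u)$ is linear on $U_x$. I would then simply set $A := C$, which is $\sigma$-directionally porous.

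The only point to check is that the local hypothesis of Theorem \ref{obyc} is vacuous here: by the definition recalled in Section 2, $f$ being pointwise Lipschitz on $G$ means exactly that $\limsup_{y\to x}\frac{\|f(y)-f(x)\|}{\|y-x\|}<\infty$ for \emph{every} $x \in G$, i.e.\ $f$ is Lipschitz at each point of $G$. In particular $f$ is Lipschitz at every $x \in G \setminus A$, so the conclusion of Theorem \ref{obyc} applies at each such $x$, which is precisely the assertion of the corollary.

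I do not expect any obstacle at this stage. The substantive difficulty — obtaining a $\sigma$-directionally porous exceptional set without assuming $f$ globally Lipschitz (and without assuming $f(X)$ separable) — was already overcome in Lemma \ref{rozd}, through the ``automatic closeness'' trick yielding \eqref{auto}, together with Lemma \ref{uzav} for closedness of $U_x$, and these were combined in the proof of Theorem \ref{obyc}. Consequently the proof of the corollary amounts to the two observations above and needs no further estimates.
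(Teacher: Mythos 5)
Your proposal is correct and matches the paper exactly: the paper derives Corollary \ref{plux} as an immediate consequence of Theorem \ref{obyc}, taking $A:=C$ and observing that pointwise Lipschitzness makes the local hypothesis of that theorem hold at every point of $G$. Nothing further is needed.
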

       \begin{corollary}\label{plga}
        Let $X$ be a separable Banach space, $Y$ a Banach space, $G \subset X$ an open set, and $f: G \to Y$ a pointwise Lipschitz mapping. Then the following implication holds at each point $x \in G$ except a $\sigma$-directionally porous set:
          
 If  the one-sided directional derivative $f'_{+}(x,u)$ exists in all directions $u$ from a set $S_x \subset X$
  whose linear span is dense in $X$, then $f$ is G\^ ateaux differentiable at $x$.
       \end{corollary}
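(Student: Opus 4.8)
The plan is to obtain this as an immediate consequence of the results just proved. In fact it follows in one line from Corollary \ref{liga}: a pointwise Lipschitz $f$ is Lipschitz at every point of $G$, so the extra clause ``$f$ is Lipschitz at $x$'' occurring in Corollary \ref{liga} is automatically satisfied at every $x$, and the exceptional $\sigma$-directionally porous set is the one provided by Corollary \ref{liga}. For the reader's convenience I would also spell the argument out starting from Corollary \ref{plux}.

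So, let $A \subset G$ be the $\sigma$-directionally porous set furnished by Corollary \ref{plux}; for each $x \in G \setminus A$ the set $U_x$ of directions $u$ in which $f'_+(x,u)$ exists is a closed linear subspace of $X$, and $u \mapsto f'_+(x,u)$ is linear on $U_x$. Fix $x \in G \setminus A$ and suppose $f'_+(x,u)$ exists for every $u$ in some $S_x \subset X$ with $\lin S_x$ dense in $X$. Then $S_x \subset U_x$, hence $\lin S_x \subset U_x$ because $U_x$ is a linear subspace, and hence $U_x = X$ because $U_x$ is closed and $\lin S_x$ is dense. Thus $f'_+(x,\cdot)$ is defined and linear on all of $X$.

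It remains to upgrade this to G\^ateaux differentiability. Linearity of $u \mapsto f'_+(x,u)$ gives $f'_+(x,-u) = -f'_+(x,u)$ for every $u$, so by Fact \ref{f}(i) the two-sided derivative $f'(x,u)$ exists and equals $f'_+(x,u)$ for every $u$; in particular $L(u):= f'(x,u)$ is a linear map $X \to Y$. Since $f$ is pointwise Lipschitz it is Lipschitz at $x$, so Fact \ref{f}(v) provides $K < \infty$ with $\|L(u)\| = \|f'_+(x,u)\| \le K\|u\|$ for all $u \in X$; hence $L$ is bounded, i.e.\ continuous. Therefore $f$ is G\^ateaux differentiable at $x$ with (G\^ateaux) derivative $L$, which is precisely the asserted implication, with exceptional set $A$.

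There is no real obstacle once Corollary \ref{plux} (equivalently Corollary \ref{liga}) is in hand; the only step that is not purely formal is the continuity of the candidate derivative $L$, which genuinely uses the pointwise Lipschitz hypothesis through Fact \ref{f}(v) — without it, linearity of the directional derivative alone would not produce a bounded operator.
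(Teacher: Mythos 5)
Your proposal is correct and matches the paper's own (unwritten) argument: the paper derives Corollary \ref{plga} as an immediate consequence of Theorem \ref{obyc} together with Fact \ref{f}(i),(v), which is exactly the chain you spell out (dense span plus closed linear $U_x$ gives $U_x=X$, linearity plus Fact \ref{f}(i) gives two-sided derivatives, and Fact \ref{f}(v) gives boundedness of the resulting linear map). Your one-line reduction via Corollary \ref{liga} is the same observation the paper intends.
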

      
   Our main result on Hadamard derivatives is the following.
   \begin{theorem}\label{had}
     Let $X$ be a separable Banach space, $Y$ a Banach space, $G \subset X$ an open set, and $f: G \to Y$ a mapping.
     Then there exists a  $\sigma$-directionally porous set $D \subset G$ such that if $x \in G \setminus D$,  then 
     either the set  $V_x$ of those directions $u \in X$ in which the one-sided Hadamard derivative $f'_{H+}(x,u)$ exists is  an empty set or $V_x$ is a closed linear subspace of $X$ and the mapping $u \mapsto f'_{H+}(x,u)$ is linear on $V_x$. 
      \end{theorem}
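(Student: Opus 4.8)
The plan is to assemble Theorem \ref{had} out of the results already proved, by reducing the Hadamard situation to the one-sided directional situation handled in Theorem \ref{obyc}. First I would set $D := A \cup C \cup M$, where $A$ is the $\sigma$-directionally porous set from Proposition \ref{cll}, $C$ is the $\sigma$-directionally porous set from Theorem \ref{obyc}, and $M$ is the $\sigma$-directionally porous set from Proposition \ref{smerdh}; then $D$ is $\sigma$-directionally porous. Now fix $x \in G \setminus D$ and assume $V_x \neq \emptyset$ (if $V_x = \emptyset$ there is nothing to prove); the goal is to show that $V_x$ is a closed linear subspace of $X$ and that $u \mapsto f'_{H+}(x,u)$ is linear on it.

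The one mildly delicate step is to show that $f$ is Lipschitz at $x$. If $0 \in V_x$, this is immediate from Lemma \ref{hnli}. Otherwise pick $0 \neq v \in V_x$; by Fact \ref{f}(iv) the normalized vector $v/\|v\|$ also lies in $V_x$, so Fact \ref{f}(vii) produces a cone $C(x, v/\|v\|, \delta)$ with $\limsup_{y \to x,\, y \in C} \|f(y)-f(x)\|/\|y-x\| < \infty$. Since $x \notin A$, Proposition \ref{cll} forces $f$ to be Lipschitz at $x$ (otherwise $x$ would belong to $A$); in particular, again by Lemma \ref{hnli}, $0 \in V_x$ too. So in every case $f$ is Lipschitz at $x$.

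Since $f$ is Lipschitz at $x$ and $x \notin C$, Theorem \ref{obyc} gives that $U_x$, the set of directions in which $f'_+(x,\cdot)$ exists, is a closed linear subspace of $X$ and that $u \mapsto f'_+(x,u)$ is linear on $U_x$. It remains to identify $V_x$ with $U_x$. The inclusion $V_x \subseteq U_x$ is exactly Fact \ref{f}(ii). Conversely, if $u \in U_x$ then $f'_+(x,u)$ exists; since $f$ is Lipschitz at $x$ and $x \notin M$, Proposition \ref{smerdh} guarantees that $f'_{H+}(x,u)$ exists as well, i.e.\ $u \in V_x$. Hence $V_x = U_x$. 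Finally, for $u \in V_x = U_x$ we have $f'_{H+}(x,u) = f'_+(x,u)$ by Fact \ref{f}(ii), so the linearity of $u \mapsto f'_+(x,u)$ on $U_x$ transfers verbatim to $u \mapsto f'_{H+}(x,u)$ on $V_x$, which completes the proof.

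The main obstacle, such as it is, is the passage in the second paragraph from the existence of a single Hadamard directional derivative in a nonzero direction to pointwise Lipschitzness of $f$ at $x$: this is precisely where Fact \ref{f}(vii) and Proposition \ref{cll} are needed, and it is the only place the argument uses a porosity-type input beyond what is already built into Theorem \ref{obyc}. Everything else is bookkeeping — Fact \ref{f}(ii) reconciles the two notions of derivative on their common domain, and Proposition \ref{smerdh} upgrades that common domain to all of $U_x$.
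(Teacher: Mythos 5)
Your proof is correct and follows essentially the same route as the paper: the same set $D = C \cup M \cup A$, the same use of Lemma \ref{hnli} and Fact \ref{f}(iv),(vii) together with Proposition \ref{cll} to establish Lipschitzness at $x$, and the same identification of $V_x$ with $U_x$ via Fact \ref{f}(ii) and Proposition \ref{smerdh} before invoking Theorem \ref{obyc}. The paper's version is merely more terse in spelling out the equality $U_x = V_x$.
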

      \begin{proof}
      Let $C$ be the set from Theorem \ref{obyc}, $M$ the set from Proposition \ref{smerdh} and $A$ the set from Proposition
       \ref{cll}. Set $D:= C \cup M \cup A$. Then $D \subset G$ is a $\sigma$-directionally porous set. Now suppose 
        that a point $x \in G \setminus D$ is given and $V_x \neq \emptyset$. 
        
      Choose $w \in V_x$.  If $w=0$, then we obtain that $f$ is Lipschitz at $x$ by Lemma \ref{hnli}. If $w \neq 0$, then we set $v: = w/\|w\|$ and apply Fact \ref{f}(iv),(vii). We obtain that there exists a cone $C:= C(x,v, \delta)$
       such that $\limsup_{y \to x, y \in C} \frac{\|f(y)-f(x)\|}{\|y-x\|} < \infty$.
       Since $x \notin A$, we obtain
         that  $f$ is Lipschitz at $x$ also in this case.
         
         Since $x \notin M$, we obtain that $f'_+(x,u)=f'_{H+}(x,u)$ whenever one of these two derivatives exists.
          So, since $f$ is Lipschitz at $x$ and  $x \notin C$, we obtain our assertion.
      \end{proof}
      
      \begin{remark}\label{equi}
      If we have in our disposal Proposition \ref{smerdh}, Proposition \ref{cll} and Lemma \ref{hnli}, then we can consider our two main theorems as ``equivalent''. Indeed, we have already shown how Theorem \ref{obyc} (and the above mentioned propositions) easily implies Theorem \ref{had}. Further,  Theorem \ref{had} and  Proposition \ref{cll} almost  immediately
       imply Theorem \ref{obyc}.
           \end{remark}

      Using  Theorem \ref{had}, Fact \ref{f}(i),(vi) and Lemma \ref{eh}, we immediately obtain the following result, which improves and generalizes
      \cite[Theorem 3.7(b)]{Io} (see Remark \ref{dom} below).
      \begin{corollary}\label{hadcom}
       Let $X$ be a separable Banach space, $Y$ a Banach space, $G \subset X$ an open set, and $f: G \to Y$ an arbitrary mapping. Then the following implication holds at each point $x \in G$ except a $\sigma$-directionally porous set:
         
 If the one-sided Hadamard derivative $f'_{H+}(x,u)$ exists in all directions $u$ from a set $S_x \subset X$
  whose linear span is dense in $X$, then $f$ is Hadamard differentiable at $x$.
            \end{corollary}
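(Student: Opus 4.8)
The plan is to derive the corollary directly from Theorem \ref{had}, using only the elementary Facts \ref{f}(i),(vi) and Lemma \ref{eh} as glue. Let $D \subset G$ be the $\sigma$-directionally porous set furnished by Theorem \ref{had}; I claim the same set works here. Fix $x \in G \setminus D$ and suppose the hypothesis holds, i.e.\ $f'_{H+}(x,u)$ exists for every $u$ in some set $S_x$ with $\overline{\lin S_x} = X$. Then $S_x \subset V_x$, so in particular $V_x \neq \emptyset$, and Theorem \ref{had} applies and tells us that $V_x$ is a \emph{closed linear subspace} of $X$ on which the map $u \mapsto f'_{H+}(x,u)$ is linear. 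A closed linear subspace containing $S_x$ contains $\lin S_x$, hence also its closure $X$, so $V_x = X$.

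Next I would set $L(u) := f'_{H+}(x,u)$ for $u \in X$. By the previous step $L$ is linear on $X$, and by Fact \ref{f}(vi) it is continuous on $V_x = X$; a linear map between Banach spaces that is continuous on all of $X$ is bounded, so $L$ is a continuous linear operator $X \to Y$. In particular $f'_{H+}(x,-u) = L(-u) = -L(u) = -f'_{H+}(x,u)$ for every $u \in X$, so the Hadamard version of Fact \ref{f}(i) shows that $f'_H(x,u)$ exists for every $u$, and it then coincides with $f'_{H+}(x,u) = L(u)$. Thus condition (ii) of Lemma \ref{eh} holds, and the implication (ii)$\Rightarrow$(i) there gives $f'_H(x) = L$; that is, $f$ is Hadamard differentiable at $x$. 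This establishes the stated implication off the $\sigma$-directionally porous set $D$.

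There is no genuine obstacle at this stage: the corollary is a formal consequence of Theorem \ref{had}, into which all the substantive content (and the porous exceptional set) has already been absorbed, together with the preliminary Propositions \ref{smerdh}, \ref{cll} and Lemma \ref{hnli} that feed it. The only non-vacuous micro-steps are the density/closedness argument yielding $V_x = X$ and the invocation of Lemma \ref{eh}, which is precisely what upgrades the pointwise identity $f'_H(x,\cdot) = L$ to genuine Hadamard differentiability (uniform convergence of the difference quotients on compact sets). One could alternatively verify the compact-uniform limit by hand, covering a given compact set by finitely many small balls and using the continuity statement of Fact \ref{f}(vi), but quoting Lemma \ref{eh} is cleaner and shorter.
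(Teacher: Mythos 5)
Your proposal is correct and follows exactly the route the paper intends: the paper derives Corollary \ref{hadcom} "immediately" from Theorem \ref{had}, Fact \ref{f}(i),(vi) and Lemma \ref{eh}, and your write-up simply makes those same steps explicit (density forces $V_x=X$, linearity plus Fact \ref{f}(vi) gives a bounded linear $L$, Fact \ref{f}(i) upgrades to two-sided Hadamard directional derivatives, and Lemma \ref{eh} yields Hadamard differentiability).
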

      \begin{remark}\label{dom}
      Ioffe's \cite[Theorem 3.7(b)]{Io} works with $f: X \to [-\infty,\infty]$ and  $\dom (f):= \{ x \in X:\ 
       |f(x)| < \infty\}$. Denote by $B$ the set of all $x \in \dom (f) \setminus  \interior(\dom(f))$ for which there exists $0 \neq v \in X$ such that  $f'_{H+}(x,v) := \lim_{z \to v, t \to 0+} \frac{f(x+tz)-f(x)}{t}$ exists and is finite. If $x \in B$, then (cf. Fact \ref{f}(vii))  there exists a cone  $C=C(x,v,\delta)$ such that  $\limsup_{y \to x, y \in C} \frac{|f(y)-f(x)|}{\|y-x\|} < \infty$, which implies that $B \subset X \setminus \interior(\dom(f))$ is directionally porous at $x$ (see \eqref{uhpo}). So $B$ is a directionally porous set. 
       
       This simple observation shows that Corollary \ref{hadcom} remains true, if we work with  $f: X \to [-\infty,\infty]$ ( and $f'_{H+}(x,u)$ is, by definition, finite). In other words, the assumption of \cite[Theorem 3.7(b)]{Io} that the  linear span of $S_x$ equals to $X$ can be relaxed to the assumption that the
   linear span of $S_x$ is dense in $X$.
      \end{remark}

 \end{document}